\documentclass[12pt]{article}

\usepackage{setspace}

\usepackage{amsthm, amssymb, amstext}
\usepackage[fleqn]{amsmath}
\usepackage{latexsym}
\usepackage{comment}
\usepackage{hyperref}
\usepackage{mathtools}
\usepackage{enumerate} 
\usepackage{times}

\usepackage{todonotes}
\usepackage{comment}

\newtheorem{theorem}{Theorem}
\newtheorem{lemma}{Lemma}

\newtheorem{observation}{Observation}
\newtheorem{corollary}{Corollary}

\theoremstyle{definition}

\usepackage{amsthm}

\title{Solution to a problem of Gr\"unbaum on the edge density of $4$-critical planar graphs}
\author{Zden\v{e}k Dvo\v{r}\'ak\thanks{Computer Science Institute (CSI) of Charles University, Prague, Czech Republic, \url{rakdver@iuuk.mff.cuni.cz}.}
\and Carl Feghali\thanks{Univ Lyon, EnsL, CNRS, LIP, F-69342, Lyon Cedex 07, France, email: \url{carl.feghali@ens-lyon.fr}.}}
\date{}

\begin{document}
\maketitle

\begin{abstract}
We show that $\limsup |E(G)|/|V(G)| = 2.5$ over all $4$-critical planar graphs $G$, answering a question of Gr\"unbaum from 1988.
 \end{abstract}

 \section{Introduction}

For a positive integer $k$, a \emph{(proper) $k$-coloring} of a graph $G = (V, E)$ is a function $f: V \rightarrow \{1, \dots, k\}$ such that $f(u) \not= f(v)$ whenever $uv \in E$. A graph $G$ is \emph{$k$-colorable} if it has a $k$-coloring. The \emph{chromatic number}, $\chi(G)$, of a graph $G$ is the smallest $k$ such that $G$ is $k$-colorable. A  graph $G = (V, E)$ is \emph{$k$-critical} if $\chi(G - e) < \chi(G) = k$ for every  edge $e \in E$. 

Recognizing $k$-colorable graphs is a well-known NP-complete problem for every $k \geq 3$. Thus, little can probably be said about their structure unless $\mbox{P} = \mbox{NP}$. One obvious structure, however, that a $k$-chromatic graph must have is that it contains a $k$-critical subgraph. For a $k$-critical graph, more can be said: the minimum and maximum number $f(k, n)$ and $F(k, n)$, respectively, of edges over all $k$-critical graphs on $n$ vertices must be bounded.  It is then perhaps not surprising that the problem of determining $f(k, n)$ or $F(k, n)$  has a very rich history. 

In particular, although we are concerned here  with the function $F(k, n)$, we remark that finding $f(k, n)$ was first asked by Dirac in 1957 \cite{dirac} and reiterated by Gallai \cite{gallai1963} in 1963, Ore~\cite{ore} in 1967 and others. It also remains an active area of research; for instance, in a recent breakthrough, Kostochka and Yancey \cite{kostochka} solved a 1963 conjecture of Dirac that $f(k, n) \geq \lceil \frac{(k+1)(k-2)n - k(k-3)}{2(k-1)} \rceil$.

On the other hand, the function $F(k, n)$ turns out to be less well-behaved. Toft \cite{toft} proved, in particular, that for every $k \geq 4$ there are $k$-critical graphs $G$ on $n$ vertices with $|E(G)| \geq c_k n^2$ for some constant $c_k >0$ (despite this, $k$-critical graphs do have many other interesting properties; see \cite{stiebitz}). 

But what if we further restrict our focus to special classes of graphs?  Of relevance, determining the maximum number,  $H(4, n)$,  of edges over all $4$-critical planar graphs has received considerable attention.
A motivation for the study of this parameter comes from
a possible pre-processing step in (superpolynomial-time) algorithms
for 3-coloring planar graphs: If the input $n$-vertex planar graph $G$ has
more than $H(4, n)$ edges, then there necessarily exists an edge $e$ such 
that $\chi(G)=\chi(G-e)$; moreover, the known upper bounds on $H(4,n)$ can
be easily turned into polynomial-time algorithms to find such an edge.
Hence, this gives us a way to reduce the size of the input (the number
of edges) before applying more time-consuming algorithms.

Let us note that Euler's formula gives a trivial upper bound $H(4, n) \leq 3n - 6$.  In 1964, Gallai~\cite{gallai1963c} conjectured that $H(4, n) \leq 2n - 2$. In 1985, Koester \cite{koester1} disproved this conjecture, constructing a $4$-critical $4$-regular planar graph. Motivated by this line of work, Gr\"unbaum \cite{grunbaum} defined and asked to determine the precise value of the function
$$L\coloneqq\limsup |E(G)|/|V(G)|,$$
over all $4$-critical planar graphs $G$ and, in that same paper, showed that $L \geq 79 / 39$ (improving the aforementioned bound of $2$ by Koester). This was, in turn, soon improved, though marginally, by Koester \cite{koester2}. Recently Yao and Zhou~\cite{yao} established that $L \geq 7 / 3$, and conjectured that this is the right answer.

In the other direction, in 1991, Abbott and Zhou \cite{abbott} showed that $L \leq 2.75$. And, in that same year, this was improved by Koester \cite{koester2} to $L \leq 2.5$. The object of this note is to show that Koester's bound is best possible. 

\begin{theorem}\label{thm:main}
    $L = 2.5$. 
\end{theorem}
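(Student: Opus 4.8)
Since Koester established $L \le 2.5$, it suffices to exhibit, for each $\varepsilon>0$, arbitrarily large $4$-critical planar graphs of edge density exceeding $2.5-\varepsilon$; the theorem then follows from the definition of $\limsup$. The plan is an edge-substitution argument. Call a plane graph $W$ with two non-adjacent vertices $s,t$ on its outer face an \emph{edge-gadget} if (i) $W$ is $3$-colorable; (ii) every $3$-coloring of $W$ gives $s$ and $t$ distinct colors; (iii) every ordered pair of distinct colors is realized on $(s,t)$ by some $3$-coloring of $W$; and (iv) for each edge $f$ of $W$, the graph $W-f$ has a $3$-coloring with $s$ and $t$ equally colored. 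Given a $4$-critical plane graph $H$ and an edge $uv\in E(H)$, let $H[W/uv]$ be the plane graph obtained by deleting $uv$, drawing a copy of $W$ in a narrow disc around the arc that represented $uv$, and identifying $s=u$, $t=v$. The first step is a \emph{substitution lemma}: if $H$ is $4$-critical and planar and $W$ is an edge-gadget, then $H[W/uv]$ is $4$-critical and planar. The proof is routine bookkeeping with $3$-colorings: a $3$-coloring of $H[W/uv]$ would by (ii) restrict to one of $H$, which is impossible; (i) and (iii) let a $4$-coloring of $H$ be extended over the inserted gadget, so $\chi(H[W/uv])=4$; and the deletion of an edge $g$ splits into the case that $g$ lies in the copy of $W$ (combine (iv) with a $3$-coloring of $H-uv$ in which $u$ and $v$ agree, which exists as $H$ is $4$-critical) and the case that $g$ lies in $H$ (the edge $uv$ still lies in $H-g$, forcing $c(u)\ne c(v)$, so extend using (iii)).

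Contracting the pair $\{s,t\}$ turns an edge-gadget $W$ into a planar graph $W^{*}$ with a marked vertex on its outer face for which conditions (ii) and (iv) say exactly that $W^{*}$ is $4$-critical; conversely, any $4$-critical planar graph with a vertex on its outer face yields a candidate $W$ by splitting that vertex into two non-adjacent vertices along two consecutive arcs of its rotation, and one must only choose the split so that $W$ inherits (i) and (iii). Thus the task reduces to producing a sufficiently \emph{dense} such $W^{*}$. Indeed, set $G_{0}=K_{4}$ and let $G_{i+1}$ be obtained from $G_{i}$ by replacing every edge with a copy of $W$; applying the substitution lemma one edge at a time keeps every $G_{i}$ $4$-critical and planar. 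With $n_{W}=|V(W)|$ and $m_{W}=|E(W)|$ we get $|V(G_{i+1})|=|V(G_{i})|+|E(G_{i})|(n_{W}-2)$ and $|E(G_{i+1})|=|E(G_{i})|\,m_{W}$, so the densities $d_{i}=|E(G_{i})|/|V(G_{i})|$ satisfy
\[
d_{i+1}=\frac{d_{i}\,m_{W}}{1+d_{i}(n_{W}-2)},
\]
an increasing function of $d_{i}$ with unique positive fixed point $(m_{W}-1)/(n_{W}-2)$; since $d_{0}=3/2$ lies below this value, $d_{i}$ increases to it while $|V(G_{i})|\to\infty$. Choosing $W$ with $(m_{W}-1)/(n_{W}-2)=5/2$, equivalently $2m_{W}=5n_{W}-8$ (the edge count of a near-triangulation with $(n_{W}-4)/2$ quadrilateral faces), then forces $d_{i}\to 5/2$, which together with $L\le 2.5$ proves the theorem; a family of gadgets whose ratios tend to $5/2$ would serve equally well.

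The crux — and where I expect essentially all the difficulty to lie — is constructing the gadget. The obvious candidate of the right size, the octahedron with one edge $st$ removed ($n_{W}=6$, $m_{W}=11$, ratio $10/4=5/2$), is planar with $s$ and $t$ as opposite corners of a common face and does force $c(s)\ne c(t)$, but it violates (iv): after deleting an edge joining $s$ to a common neighbor of $s$ and $t$, one checks that every $3$-coloring still has the two vertices of $s$'s part equally colored and $c(s)\ne c(t)$, so the forcing persists. One therefore needs a genuinely subtler, ``barely forcing'' dense planar graph $W^{*}$ — most plausibly built by a recursive construction that repeatedly patches a dense but over-rigid base until condition (iv) holds at every edge while the density stays at $5/2$ — followed by a careful case analysis of its $3$-colorings to verify (i)–(iv), together with a check that the vertex split can be chosen so that $W$ is $3$-colorable and realizes all colour pairs at $\{s,t\}$. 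Handling the minor technicalities along the way (simplicity of the graphs produced by substitution, absence of a vertex adjacent to both $s$ and $t$ so that contraction stays simple, and the planarity of the insertion) I expect to be straightforward once the gadget is in hand.
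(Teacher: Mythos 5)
Your reduction framework is fine as far as it goes: the substitution lemma is correct (in a $3$-coloring of $H-uv$ the ends $u,v$ are necessarily equally colored, so (iv) handles edges inside the gadget, and (iii) handles edges of $H$), and the density recursion with fixed point $(m_W-1)/(n_W-2)$ is computed correctly. The genuine gap is that the edge-gadget $W$ is never constructed, and its construction is not a deferred technicality but essentially the theorem itself. As you note, contracting $s,t$ turns a valid gadget into a $4$-critical planar graph $W^{*}$ with $n_W-1$ vertices and $m_W$ edges, so your target $2m_W=5n_W-8$ asks for a $4$-critical planar graph with $|E(W^{*})|=\tfrac{5|V(W^{*})|-3}{2}$, i.e.\ within $1.5$ edges of the $2.5|V|$ ceiling; no such graph was known (the best prior constructions have density about $7/3$), and since Koester's upper bound carries a negative additive constant it is entirely possible that no single finite gadget attains the exact fixed point $5/2$. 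Your fallback, a family of gadgets with ratios tending to $5/2$, requires a family of $4$-critical planar graphs of density tending to $2.5$ --- which is precisely the statement to be proven --- and on top of that the extra extension property (iii) and a vertex split preserving (i) and (iii). So the proposal is circular at its core: all the substance of Theorem~\ref{thm:main} is hidden in ``construct the gadget,'' which you explicitly leave open.

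For contrast, the paper never exhibits or verifies a $4$-critical graph at all, which is how it escapes the rigidity-versus-criticality tension your conditions (i)--(iv) encode (and which already kills your octahedron candidate). It builds an explicit planar graph $G_n$ of chromatic number four on roughly $2n^2$ vertices --- a lattice $L_n$ whose top and bottom rows are linked through small gadgets so that a $3$-coloring exists exactly when one of the two rows is monochromatic and the other is not (Lemma~\ref{lem:bothmono}), which Lemma~\ref{lem:L} forbids --- and then shows by explicit colorings that each of the roughly $5n^2$ lattice edges is critical (Lemmas~\ref{claim:1} and~\ref{claim:2}). By Lemma~\ref{lem:critical}, any $4$-critical subgraph of $G_n$ must contain all of these critical edges while having at most $|V(G_n)|$ vertices, giving density $2.5-O(1/n)$ without anyone ever certifying which subgraph that is. If you wanted to salvage your substitution route, you would need something like the paper's construction as input to manufacture the gadgets in the first place, at which point the substitution machinery buys nothing extra.
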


We prove Theorem \ref{thm:main} by constructing, for each integer $m$
sufficiently large, a $4$-critical planar graph $P_m$ on at most $m$ vertices
such that $$\lim_{m \rightarrow \infty} \frac{|E(P_m)|}{m} = 2.5.$$  This
clearly implies Theorem \ref{thm:main} and thus answers Gr\"unbaum's
aforementioned question. It also refutes the aforementioned conjecture of Yao and
Zhou~\cite{yao} asserting that $L = 7/3$.   

Li et al.~\cite{li20223} studied the dual problem of density of 3-flow-critical graphs. 
The dual to a 4-critical plane graph is 3-flow-critical.  By Theorem~\ref{thm:main},
there exist 4-critical plane graphs with $n$ vertices and $2.5n+o(n)$ edges.
By Euler's formula, such graphs have $1.5n+o(n)$ faces.  Hence, the dual is
a 3-flow-critical graph with $m=1.5n+o(n)$ vertices and $2.5n+o(n)=\tfrac{5}{3}m+o(m)$
edges.  This gives the current best construction of sparse 3-flow-critical graphs
(improved from $\tfrac{7}{4}m+o(m)$ which follows by dualizing the construction of Yao and
Zhou~\cite{yao}), and this is the best possible for planar 3-flow-critical graphs.
Let us remark that Li et al.~\cite{li20223} proved that every (not necessarily planar)
3-flow-critical graph with $m$ vertices has edge density at least $1.6+o(m)$, and thus
$$1.6\le \liminf_{\text{$G$ 3-flow-critical}} \frac{|E(G)|}{|V(G)|}\le \tfrac{5}{3}.$$

\section{The gadgets}

Let us start by describing properties of several graphs that
are combined in our construction.

For a positive integer $n$, let $L_n$ be the graph depicted in Figure \ref{fig:lattice}. Its vertices are labelled alternately $x_{1, 1}, x_{1,2}, \dots, x_{1, n}$ then $y_{1, 1}, y_{1,2}, \dots, y_{1, n+1}$ followed by $x_{2, 1}, x_{2, 2}, \dots, x_{2, n}$ etc. up until $y_{n-1, 1}, y_{n-1, 2}, \dots, y_{n-1, n+1}$ then $x_{n, 1}, x_{n, 2}, \dots, x_{n, n}$ in a left--right and top--bottom fashion as partially illustrated in Figure \ref{fig:lattice}.
For $i\in[n-1]$, we say that $\{y_{i,1},\ldots,y_{i,n+1}\}$ is the $i$-th \emph{stripe} of $L_n$.  In the first lemma, we record some properties of the graph $L_n$. 

\begin{figure}
\begin{center}\includegraphics[width=1\textwidth]{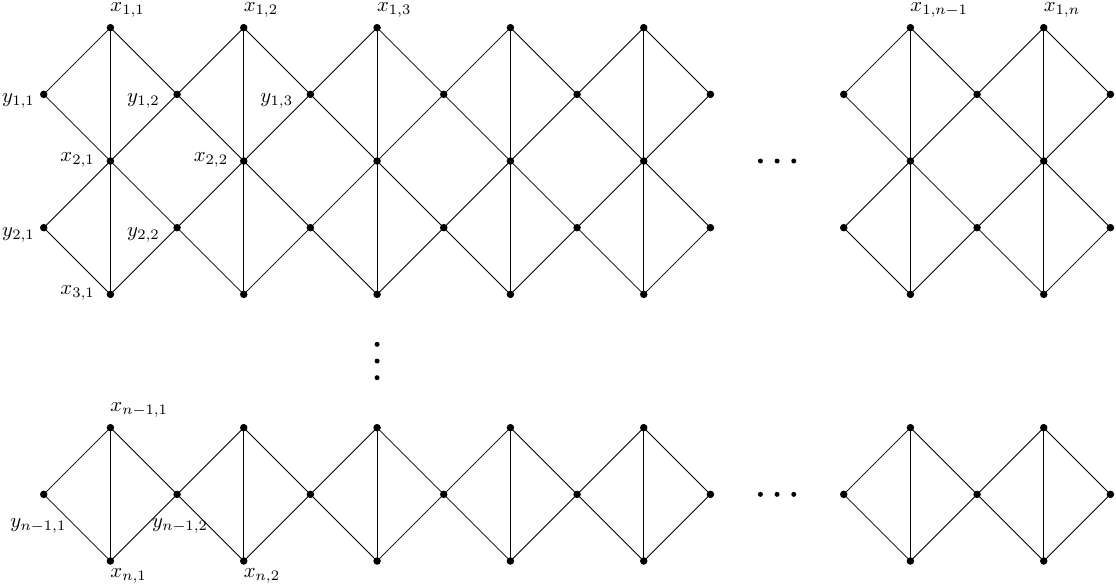}\end{center}
\caption{The graph $L_n$}
\label{fig:lattice}
\end{figure}

\begin{lemma}\label{lem:L}
Let $n\ge 2$ be an integer. For every $3$-coloring $\varphi$ of $L_n$ and every $j_1,j_2 \in [n]$,
$\varphi(x_{1, j_1}) = \varphi(x_{1, j_2})$ if and only if  $\varphi(x_{n, j_1}) = \varphi(x_{n, j_2})$. 
\end{lemma}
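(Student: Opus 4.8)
The plan is to show that every row of $L_n$ carries the same color \emph{pattern} — the partition of $[n]$ into the classes $\{j : \varphi(x_{i,j})=c\}$, $c\in\{1,2,3\}$ — and then specialize to the first and last rows. First I would read off from Figure~\ref{fig:lattice} the two families of edges the argument uses: consecutive $x$-vertices of a row are adjacent, i.e.\ $x_{i,j}x_{i,j+1}\in E(L_n)$ for $j\in[n-1]$; and each interior stripe vertex $y_{i,j+1}$ (for $i\in[n-1]$, $j\in[n-1]$) is adjacent to all four of $x_{i,j},x_{i,j+1},x_{i+1,j},x_{i+1,j+1}$. The remaining edges of $L_n$ will not be needed.

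Fix a $3$-coloring $\varphi$ with colors $\{1,2,3\}$ and fix $i\in[n-1]$, $j\in[n-1]$. Since $x_{i,j}x_{i,j+1}$ is an edge and $y_{i,j+1}$ is adjacent to both endpoints, $\varphi(y_{i,j+1})$ is forced to be the unique color missing from $\{\varphi(x_{i,j}),\varphi(x_{i,j+1})\}$; the same holds with $i+1$ in place of $i$, and therefore $\{\varphi(x_{i,j}),\varphi(x_{i,j+1})\}=\{1,2,3\}\setminus\{\varphi(y_{i,j+1})\}=\{\varphi(x_{i+1,j}),\varphi(x_{i+1,j+1})\}$. Thus consecutive columns of rows $i$ and $i+1$ always use the same unordered pair of colors.

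Next I would upgrade this ``consecutive-pair agreement'' to ``full pattern agreement'' between rows $i$ and $i+1$. Writing $a_j\coloneqq\varphi(x_{i,j})$ and $b_j\coloneqq\varphi(x_{i+1,j})$, I have $a_j\ne a_{j+1}$, $b_j\ne b_{j+1}$ and $\{a_j,a_{j+1}\}=\{b_j,b_{j+1}\}$ for all $j\in[n-1]$. A short induction on $j$ then shows that exactly one of two things happens: either $b_j=a_j$ for all $j$; or rows $i$ and $i+1$ each use only two colors $\alpha,\beta$, alternating, in opposite phase (with $\alpha=a_1$, $\beta=a_2$, so $a_j=\alpha$, $b_j=\beta$ for odd $j$ and $a_j=\beta$, $b_j=\alpha$ for even $j$). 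In the first case the two rows are colored identically; in the second, $a_{j_1}=a_{j_2}\iff j_1\equiv j_2\pmod 2\iff b_{j_1}=b_{j_2}$. Either way $a_{j_1}=a_{j_2}\iff b_{j_1}=b_{j_2}$ for all $j_1,j_2\in[n]$, i.e.\ rows $i$ and $i+1$ induce the same partition of $[n]$.

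Since consecutive rows induce the same partition for every $i\in[n-1]$, all $n$ rows induce a common partition; applying this to rows $1$ and $n$ yields $\varphi(x_{1,j_1})=\varphi(x_{1,j_2})\iff\varphi(x_{n,j_1})=\varphi(x_{n,j_2})$, which is the claim. The only step needing real care is the inductive dichotomy in the third paragraph — propagating agreement of the unordered color pairs on adjacent columns into agreement of the whole color pattern; everything else is a direct consequence of the triangulated structure of $L_n$ visible in Figure~\ref{fig:lattice}.
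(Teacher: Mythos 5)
There is a genuine gap, and it is right at the start: you have misread the edge set of $L_n$. In $L_n$, consecutive vertices of the same row are \emph{not} adjacent; the edges incident with the $x$-vertices are the vertical edges $x_{i,j}x_{i+1,j}$ together with the diagonal edges joining each stripe vertex $y_{i,m}$ to the (at most four) $x$-vertices $x_{i,m-1},x_{i,m},x_{i+1,m-1},x_{i+1,m}$. You can see this from the rest of the paper even without the figure: the ``smear'' and ``pattern'' colorings in the proofs of Lemmas~\ref{claim:1} and~\ref{claim:2} assign one color to all of $\{x_{k,1},\dots,x_{k,n}\}$ (or to long consecutive runs of a row), Lemma~\ref{lem:bothmono} requires colorings in which $\{x_{1,1},\ldots,x_{1,n}\}$ is monochromatic, and the critical-edge count $(n-1)n+4(n-1)^2$ in the proof of Theorem~\ref{thm:main} accounts for exactly the vertical and the diagonal edges --- none of this is compatible with edges $x_{i,j}x_{i,j+1}$. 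Consequently your first forcing step, that $\varphi(y_{i,j+1})$ must be the unique color missing from $\{\varphi(x_{i,j}),\varphi(x_{i,j+1})\}$, is invalid: $x_{i,j}$ and $x_{i,j+1}$ may well receive the same color, in which case the color of $y_{i,j+1}$ is not determined by this pair. The invariant $\{a_j,a_{j+1}\}=\{b_j,b_{j+1}\}$ is therefore never established, and the dichotomy and induction built on it collapse. Moreover, you explicitly set aside ``the remaining edges'' of $L_n$, but those vertical edges are precisely the ones that transmit information between rows.

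The correct mechanism (and the paper's) runs through the vertical edges: the two consecutive stripe vertices $y_{i,j}$ and $y_{i,j+1}$ are both adjacent to both ends of the edge $x_{i,j}x_{i+1,j}$, so in any $3$-coloring they are forced to the third color; hence every stripe is monochromatic, say with color $c_i$. Then $\varphi(x_{i+1,j})$ is the unique color different from $c_i$ and $\varphi(x_{i,j})$, so $\varphi(x_{i+1,j_1})=\varphi(x_{i+1,j_2})$ if and only if $\varphi(x_{i,j_1})=\varphi(x_{i,j_2})$, and the ``same color'' relation propagates from row $1$ to row $n$. Your overall plan --- propagate the color partition of a row downward through the grid --- has the right shape, but the propagation must be driven by the monochromatic stripes and the vertical edges, not by horizontal edges that $L_n$ does not have.
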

\begin{proof}
Note that for $i\in[n-1]$, there exists $c_i\in[3]$ such that all vertices of the $i$-th stripe have color $c_i$.
Moreover, observe that for $i\in[n-1]$, $\varphi(x_{i+1,j_1})$ is the unique
color different from $c_i$ and $\varphi(x_{i,j_1})$, and consequently,
$\varphi(x_{i+1, j_1}) = \varphi(x_{i+1, j_2})$ if and only if $\varphi(x_{i, j_1}) = \varphi(x_{i, j_2})$.  This clearly implies the
claim of the lemma.
\end{proof}

In the following lemmas, we record some properties of the graphs $G_0$, $G_1$, and $G_2$ that are depicted in Figures~\ref{fig:gadget0}, \ref{fig:gadget1prime},
\ref{fig:gadget1} and \ref{fig:gadget2}, respectively. We are going to use the following observation repeatedly.
\begin{observation}\label{obs:star}
Let $F$ be a graph consisting of a triangle $xyz$ and vertices $u$ and $v$ adjacent to $y$ and $z$, respectively.
In any $3$-coloring $\varphi$ of this graph, if $\varphi(u) = \varphi(v)$, then $\varphi(x) = \varphi(u) (=\varphi(v))$.
\end{observation}

Let us now analyze the gadget depicted in Figure~\ref{fig:gadget0}.
\begin{lemma}\label{lem:g0}
Let $G_0$ be the graph depicted in Figure~\ref{fig:gadget0}.  A 3-coloring $\varphi$ of $\{v_1,v_2,v_3,v_4\}$
extends to a 3-coloring of $G$ if and only if either
\begin{itemize}
\item $\varphi(v_1)=\varphi(v_2)=\varphi(v_3)=\varphi(v_4)$, or
\item $\varphi(v_1)\neq \varphi(v_2)$ and $\varphi(v_3)\neq\varphi(v_4)$.
\end{itemize}
\end{lemma}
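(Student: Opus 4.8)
The plan is to read off the internal structure of $G_0$ from Figure~\ref{fig:gadget0} and verify both directions directly, using Observation~\ref{obs:star} as the engine for the forward implication. Write (A) for the alternative $\varphi(v_1)=\varphi(v_2)=\varphi(v_3)=\varphi(v_4)$ and (B) for the alternative $\varphi(v_1)\neq\varphi(v_2)$ and $\varphi(v_3)\neq\varphi(v_4)$.

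For the ``if'' direction I would handle (A) and (B) separately by exhibiting an explicit extension. In case (A) every internal vertex of $G_0$ sees only the single colour $\varphi(v_1)$ among the terminals it is adjacent to, so one can colour the internal triangles one at a time --- each has its three colours determined up to a transposition --- and no conflict arises; this is a short finite check. Case (B) is the substantive one: each internal triangle $T$ of $G_0$ carries one terminal vertex and is joined by pendant edges to two further terminals whose colours are \emph{distinct}. The two vertices of $T$ other than its terminal vertex must receive the two colours missing from that terminal vertex, in one of the two possible orders, and exactly one of the orders is compatible with the two pendant edges --- the standard one-line argument being that both orders fail only if two of the prescribed colours coincide, which is ruled out because $\varphi(v_1)\neq\varphi(v_2)$ and $\varphi(v_3)\neq\varphi(v_4)$. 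As distinct internal triangles of $G_0$ meet only in terminals, these local choices are independent and assemble into a $3$-coloring of $G_0$.

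For the ``only if'' direction, let $\varphi$ be a $3$-coloring of $G_0$ and suppose (B) fails, so $\varphi(v_1)=\varphi(v_2)$ or $\varphi(v_3)=\varphi(v_4)$; since the graph of Figure~\ref{fig:gadget0} is symmetric under the exchange of $\{v_1,v_2\}$ with $\{v_3,v_4\}$ (or, failing that, by repeating the argument on the other pair), assume $\varphi(v_1)=\varphi(v_2)=:a$. Now apply Observation~\ref{obs:star} to each triangle of $G_0$ two of whose vertices are adjacent to $v_1$ and $v_2$: the apex of such a triangle is forced to colour $a$. Tracing this through the picture, the apexes in question are (or in turn force, via a further application of Observation~\ref{obs:star}) the vertices $v_3$ and $v_4$, so $\varphi(v_3)=\varphi(v_4)=a$ and we are in case (A). Hence any colouring of the terminals that extends satisfies (A) or (B), which is exactly the claim.

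The main obstacle is bookkeeping against the actual drawing: one must match the abstract invocations of Observation~\ref{obs:star} to the concrete adjacencies in Figure~\ref{fig:gadget0}, checking in particular that every internal vertex of $G_0$ lies on a triangle to which the observation applies, so that the forcing in the ``only if'' direction propagates all the way to all of $v_1,v_2,v_3,v_4$, and that in case (B) the (at most two) terminal colours incident to each internal vertex never over-constrain it. None of this is conceptually deep; it is the routine but necessary verification that the picture does what the lemma says.
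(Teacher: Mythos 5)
Your high-level strategy (Observation~\ref{obs:star} for the forcing direction, explicit extensions for the other) matches the paper's, but your argument is built on a guessed internal structure of $G_0$ that does not match the figure, and the guess erases exactly the point where the lemma has content. In reality $G_0$ is obtained by gluing four copies of the fragment $F_0$ (shown on the right of Figure~\ref{fig:gadget0}) in a chain that passes through two \emph{internal, non-terminal} vertices $w_1$ and $w_2$. Hence your assertion that ``distinct internal triangles of $G_0$ meet only in terminals'' is false, and your case (B) extension collapses: the local colorings of the pieces are coupled through $w_1$ and $w_2$, so they cannot be chosen independently triangle by triangle. One must pick the colors of these hub vertices globally --- the paper gives $w_2$ a color different from $\varphi(v_1)$ and then $w_1$ a color different from both $\varphi(w_2)$ and $\varphi(v_4)$, which is possible only because three colors are available --- and then invoke the fragment's extension property ($\star$): a $3$-coloring of $\{v_1,v_2,w\}$ extends to $F_0$ iff it is constant or the color of $v_1$ differs from both other colors (established via Observation~\ref{obs:star} plus the explicit colorings of Figure~\ref{fig:gadget0cols}). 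This coordinated choice is the substance of the ``if'' direction and is absent from your proposal.

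The ``only if'' direction has the same defect: when $\varphi(v_1)=\varphi(v_2)$, the forcing does not reach $v_3$ and $v_4$ through triangles adjacent to $v_1$ and $v_2$; it propagates along the chain $v_1=v_2\Rightarrow w_2=v_1\Rightarrow w_1=w_2\Rightarrow v_4=w_1\Rightarrow v_3=v_4$, i.e.\ through four successive applications of ($\star$), each an instance of Observation~\ref{obs:star} inside one copy of $F_0$ (and similarly the inequality $\varphi(v_1)\neq\varphi(v_2)$ propagates to $\varphi(v_3)\neq\varphi(v_4)$). Your parenthetical ``or in turn force, via a further application'' gestures at such a chain but never establishes it, and your closing paragraph explicitly defers the matching against the actual adjacencies --- but since the lemma is about one specific pictured graph, that verification \emph{is} the proof. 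Without an analysis equivalent to the paper's decomposition into four copies of $F_0$ and the chaining of ($\star$) through $w_2$ and $w_1$, the argument is incomplete, and as written its structural premises (independent triangles, each pendant to a pair of terminals with distinct colors) are wrong for $G_0$.
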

\begin{proof}
Let us first consider the fragment $F_0$ of $G_0$ depicted on the right in Figure~\ref{fig:gadget0}.
We claim that a $3$-coloring $\psi$ of $\{v_1,v_2,w\}$ extends to a $3$-coloring of $F_0$ if and only if either
\begin{itemize}
\item $\psi(v_1)=\psi(v_2)=\psi(w)$, or
\item $\psi(v_2)\neq \psi(v_1)\neq \psi(w)$.
\end{itemize}
Indeed, the three $3$-colorings of $F_0$ corresponding (up to renaming of the colors) to these possibilities are
shown in Figure~\ref{fig:gadget0cols}.  On the other hand, consider any $3$-coloring $\psi$ of $F_0$.
If $\psi(v_1)=\psi(v_2)$, then Observation~\ref{obs:star} applied to a subgraph of $F_0$ implies that $\psi(w)=\psi(v_1)$.
Conversely, if $\psi(w)=\psi(v_1)$, then Observation~\ref{obs:star} applied to another subgraph of $F_0$ implies that $\psi(v_1)=\psi(v_2)$.
Hence, the $3$-colorings of $F_0$ satisfy the described property.  We are going to refer to this property as ($\star$).

Note that $G_0$ is obtained by gluing four copies of $F_0$.  If $\varphi(v_1)=\varphi(v_2)=\varphi(v_3)=\varphi(v_4)$,
then we color $w_1$ and $w_2$ by the same color and extend the coloring to the copies of $F_0$ by ($\star$).
If $\varphi(v_1)\neq \varphi(v_2)$ and $\varphi(v_3)\neq\varphi(v_4)$, then we
give $w_2$ a color different from $\varphi(v_1)$, $w_2$ a color different
from $\varphi(w_2)$ and $\varphi(v_4)$, and extend the coloring to the copies of $F_0$ by ($\star$).
Hence, if $\varphi$ satisfies the conditions described in the statement of the lemma, then it extends to
a 3-coloring of $G_0$.

Conversely, consider any $3$-coloring $\varphi$ of $G_0$.  If $\varphi(v_1)=\varphi(v_2)$, then ($\star$) applied
to the four copies of $F_0$ implies that $w_2$, $w_1$, $v_4$, and $v_3$ all receive the color $\varphi(v_1)$.
If $\varphi(v_1)\neq\varphi(v_2)$, then ($\star$) applied to the four copies of $F_0$ implies that
$\varphi(w_2)\neq \varphi(v_1)$, $\varphi(w_1)\neq\varphi(w_2)$, $\varphi(v_4)\neq\varphi(w_1)$,
and finally $\varphi(v_3)\neq \varphi(v_4)$, as required.
\end{proof}

\begin{figure}
\begin{center}\includegraphics[width=0.8\textwidth]{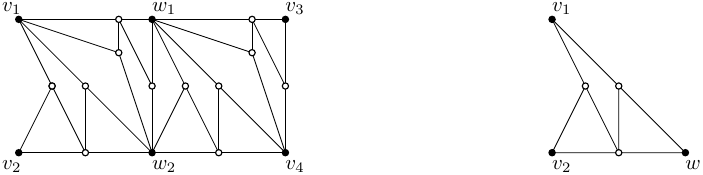}\end{center}
\caption{The graph $G_0$ (left) and its constituent piece $F_0$ (right).}
\label{fig:gadget0}
\end{figure}

\begin{figure}
\begin{center}\includegraphics[width=0.8\textwidth]{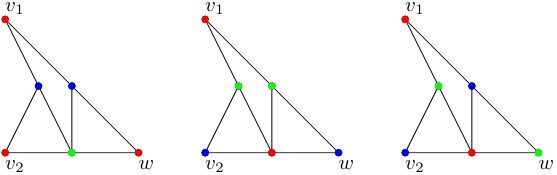}\end{center}
\caption{Useful $3$-colorings of $F_0$.}
\label{fig:gadget0cols}
\end{figure}

\begin{figure}
\begin{center}\includegraphics[width=0.4\textwidth]{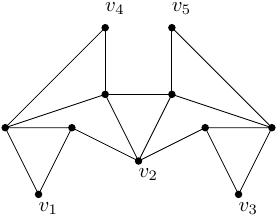}\end{center}
\caption{The graph $G'_1$.}
\label{fig:gadget1prime}
\end{figure}

\begin{figure}
\begin{center}\includegraphics[width=0.8\textwidth]{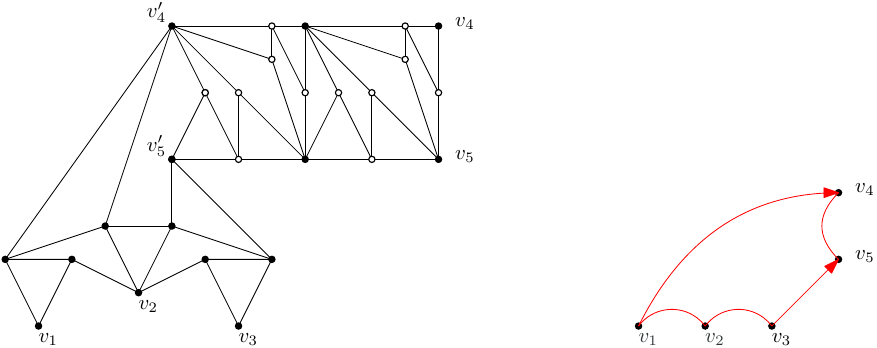}\end{center}
\caption{The graph $G_1$ and its symbolic representation.}
\label{fig:gadget1}
\end{figure}

\begin{figure}
\begin{center}\includegraphics[width=0.8\textwidth]{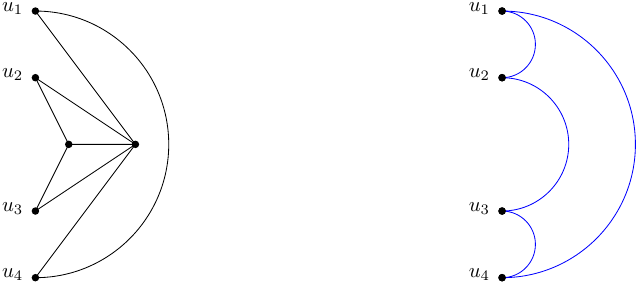}\end{center}
\caption{The graph $G_2$ and its symbolic representation.}
\label{fig:gadget2}
\end{figure}

Next, let us analyze the graph $G_1$ depicted in Figure~\ref{fig:gadget1}, starting from its constituent piece
shown in Figure~\ref{fig:gadget1prime}.

 \begin{lemma}\label{lem:g1prime}
 In every $3$-coloring $\varphi$ of the graph $G'_1$ depicted in Figure \ref{fig:gadget1prime},
 \begin{itemize}
     \item if $\varphi(v_1) = \varphi(v_2) = \varphi(v_3)$, then $\varphi(v_4) = \varphi(v_5) = \varphi(v_2)$; and,
     \item if at least two different colors appear on $v_1$, $v_2$, and $v_3$, then $\varphi(v_4)\neq\varphi(v_5)$.
 \end{itemize}
 Moreover, every $3$-coloring of $\{v_1,v_2,v_3\}$ extends to a $3$-coloring of $G'_1$.
 \end{lemma}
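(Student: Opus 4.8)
The plan is to follow the template of Lemma~\ref{lem:g0}: break $G'_1$ into the small ``triangle-plus-two-pendants'' configurations of Observation~\ref{obs:star} and push colour equalities (and inequalities) through them. Reading the adjacencies off Figure~\ref{fig:gadget1prime}, $G'_1$ has a handful of internal vertices besides $v_1,\dots,v_5$, and each triangle of $G'_1$, together with two vertices attached to two of its corners, is a copy of the configuration in Observation~\ref{obs:star}; the terminals $v_1,v_2,v_3$ feed into one end and $v_4,v_5$ sit at the other. So the first thing I would do is fix this decomposition and record, for each copy, which two vertices play the roles of $u$ and $v$ and which plays $x$.

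For the first bullet, set $c:=\varphi(v_1)=\varphi(v_2)=\varphi(v_3)$ and apply Observation~\ref{obs:star} to the copy whose pendant vertices are two of $v_1,v_2,v_3$: its apex is forced to colour $c$. Then propagate: each subsequent triangle now has two vertices of colour $c$ among its corners carrying pendants, so Observation~\ref{obs:star} forces its apex to $c$ as well, and chasing this to the far end gives $\varphi(v_4)=\varphi(v_5)=c$. For the second bullet, assume two colours occur on $\{v_1,v_2,v_3\}$; here I would split into cases according to which pair(s) among $v_1v_2$, $v_1v_3$, $v_2v_3$ are monochromatic --- up to the symmetry of $G'_1$ ideally just ``exactly one monochromatic pair'' and ``all three distinct''. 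In the first configuration the two pendants now receive different colours, so by the contrapositive of Observation~\ref{obs:star} the apex avoids at least one of them; tracking these forced distinctnesses down the chain yields $\varphi(v_4)\neq\varphi(v_5)$. I expect this case analysis to be the main obstacle, because it uses the exact wiring of $G'_1$ and is a genuine (if short) exhaustive check rather than one clean invocation of the observation; if the observation alone does not suffice, the fallback is to verify the finitely many colourings of $G'_1$ directly, as $G'_1$ has only constantly many vertices.

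Finally, for the ``moreover'' statement, by the symmetry of the hypothesis it is enough to extend three representative precolourings of $\{v_1,v_2,v_3\}$: the monochromatic one, one using exactly two colours, and the rainbow one. In the monochromatic case colour every apex (including $v_4,v_5$) with the common colour and fill the third vertex of each triangle with one of the other two colours; in the remaining two cases give $v_4$ and $v_5$ two distinct colours and complete the triangles one by one, which always succeeds since each triangle has three vertices and at most two colours are ever blocked. Displaying these colourings explicitly --- in the spirit of Figure~\ref{fig:gadget0cols} --- finishes the proof.
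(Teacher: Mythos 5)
Your first bullet and the ``moreover'' part are in the spirit of the paper's proof (the paper gets the first bullet by two direct applications of Observation~\ref{obs:star} -- $\varphi(v_1)=\varphi(v_2)$ forces $\varphi(v_4)=\varphi(v_2)$, and $\varphi(v_2)=\varphi(v_3)$ forces $\varphi(v_5)=\varphi(v_2)$ -- and handles the extension by a greedy colouring with a specific choice in the rainbow case). But your argument for the second bullet has a genuine gap. You propose to assume that two colours appear on $\{v_1,v_2,v_3\}$ and to propagate \emph{inequalities} forward through the triangles via ``the contrapositive of Observation~\ref{obs:star}''. That contrapositive says: if the apex differs from one pendant, then the two pendants differ. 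It does \emph{not} say that differing pendants force any constraint on the apex; when the two pendants receive different colours, the observation gives no information at all about the apex (your phrase ``the apex avoids at least one of them'' is vacuously true of any vertex and two distinct colours, and cannot be chained into $\varphi(v_4)\neq\varphi(v_5)$). In particular, in the case $\varphi(v_1)=\varphi(v_3)\neq\varphi(v_2)$ your forward propagation produces nothing.

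The fix, which is exactly what the paper does, is to take the contrapositive of the \emph{bullet} rather than of the observation, so that you only ever propagate equalities: assume $\varphi(v_4)=\varphi(v_5)$; the triangle of $G'_1$ having $v_4$ and $v_5$ as its pendants and $v_2$ as its apex forces $\varphi(v_2)=\varphi(v_4)$; then the triangle with pendants $v_2,v_4$ and apex $v_1$ forces $\varphi(v_1)=\varphi(v_2)$, and the one with pendants $v_2,v_5$ and apex $v_3$ forces $\varphi(v_3)=\varphi(v_2)$, i.e.\ $v_1,v_2,v_3$ are monochromatic. Note that this uses the gadget ``in both directions'' ($v_4,v_5$ as pendants, $v_1,v_2,v_3$ as apexes), which your picture of a one-way chain from $\{v_1,v_2,v_3\}$ to $\{v_4,v_5\}$ misses. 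Your fallback of checking all colourings of a constant-size graph would of course settle the matter, but as written it is an appeal to brute force rather than an argument, and your primary route would not go through. Similarly, in the ``moreover'' part the claim that ``at most two colours are ever blocked'' when completing each triangle is not justified without the exact wiring; the paper avoids this by prescribing the order (colour $v_4$, $v_5$ and the common neighbour of $v_1,v_2$ last when $\psi(v_1)=\psi(v_2)$) and, in the rainbow case, by first assigning $\psi(v_2)$ to the common neighbours of $v_1,v_4$ and of $v_3,v_5$.
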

 
 \begin{proof}
  By Observation~\ref{obs:star}, if $\varphi(v_1)=\varphi(v_2)$,
  then $\varphi(v_4)=\varphi(v_2)$, and if $\varphi(v_2)=\varphi(v_3)$, then $\varphi(v_5)=\varphi(v_2)$.  This implies the first claim of the lemma.

  For the second claim of the lemma, let us prove its converse: If $\varphi(v_4)=\varphi(v_5)$, then $\varphi(v_1) = \varphi(v_2) = \varphi(v_3)$.
  Indeed, suppose that $\varphi(v_4)=\varphi(v_5)=1$.  By Observation~\ref{obs:star}, we first conclude that $\varphi(v_2)=1$.
  Applying Observation~\ref{obs:star} again, $\varphi(v_2)=\varphi(v_4)=1$ implies $\varphi(v_1)=1$, and $\varphi(v_2)=\varphi(v_5)=1$ implies $\varphi(v_3)=1$.

  Finally, consider any $3$-coloring $\psi$ of $\{v_1,v_2,v_3\}$.  If
  $\psi(v_1)=\psi(v_2)$, then we extend the coloring to $G'_1$ greedily,
  coloring $v_4$, $v_5$, and the common neighbor of $v_1$ and $v_2$ last.
  Similarly, we can extend the coloring if $\psi(v_2)=\psi(v_3)$.
  If $\psi(v_1)\neq\psi(v_2)\neq \psi(v_3)$, then 
  we color the common neighbors of $v_1$ and $v_4$ and of $v_3$ and $v_5$
  by $\psi(v_2)$, then extend the coloring greedily.
  \end{proof}
We note that, in $G'_1$, a 3-coloring $\varphi$ of $\{v_1,v_2,v_3,v_4, v_5\}$ such that at least two different colors appear on $v_1$, $v_2$, and $v_3$, and $\varphi(v_4)\neq\varphi(v_5)$ does not always extend to a $3$-coloring of $G'_1$. To allow this, we construct the graph $G_1$ depicted in Figure \ref{fig:gadget1}.  Since $G_1$ is obtained by combining graphs $G'_1$ and $G_0$, Lemmas~\ref{lem:g0} and \ref{lem:g1prime} have
the following consequence.

\begin{corollary}\label{cor:g1}
Let $G_1$ be the graph depicted in Figure \ref{fig:gadget1}.  A $3$-coloring $\varphi$ of $\{v_1,v_2,v_3,v_4, v_5\}$ extends to a $3$-coloring of $G_1$
if and only if either
\begin{itemize}
\item $\varphi(v_1)=\varphi(v_2)=\varphi(v_3)=\varphi(v_4)=\varphi(v_5)$, or
\item at least two different colors appear on $v_1$, $v_2$, and $v_3$, and $\varphi(v_4)\neq\varphi(v_5)$.
\end{itemize}
\end{corollary}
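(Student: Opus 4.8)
The plan is to obtain Corollary~\ref{cor:g1} by analyzing how $G_1$ is assembled from the two already-understood pieces $G'_1$ and $G_0$. Looking at Figure~\ref{fig:gadget1}, the graph $G_1$ is the union of a copy of $G'_1$ on vertices $v_1,v_2,v_3,v_4,v_5$ and a copy of $G_0$ whose four terminals are identified with $v_1,v_2,v_4,v_5$ (in this cyclic order, so that the ``opposite pairs'' of $G_0$ are $\{v_1,v_2\}$ and $\{v_4,v_5\}$); these are the only shared vertices, so a $3$-coloring of $G_1$ is exactly a $3$-coloring of the terminals together with compatible extensions to both pieces. I would first state this decomposition explicitly (reading it off the figure), and then the corollary follows by intersecting the extension conditions from Lemma~\ref{lem:g1prime} and Lemma~\ref{lem:g0}.

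For the ``if'' direction, suppose $\varphi$ is a coloring of $\{v_1,\dots,v_5\}$ of one of the two types. In the first case, $\varphi(v_1)=\dots=\varphi(v_5)$: then in particular $\varphi(v_1)=\varphi(v_2)=\varphi(v_3)$, so by Lemma~\ref{lem:g1prime} (which says \emph{every} coloring of $\{v_1,v_2,v_3\}$ extends, and forces $\varphi(v_4)=\varphi(v_5)=\varphi(v_2)$ consistently with our $\varphi$) the restriction to $G'_1$ extends; and $\varphi(v_1)=\varphi(v_2)=\varphi(v_4)=\varphi(v_5)$ is of the first type in Lemma~\ref{lem:g0}, so it extends to $G_0$. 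In the second case, at least two colors appear among $v_1,v_2,v_3$ and $\varphi(v_4)\neq\varphi(v_5)$; then the $G'_1$-part extends by the last sentence of Lemma~\ref{lem:g1prime} applied after noting the coloring of $\{v_1,v_2,v_3\}$ extends and the forced behavior of $v_4,v_5$ is compatible, and we must check that $\varphi$ restricted to $\{v_1,v_2,v_4,v_5\}$ satisfies a condition of Lemma~\ref{lem:g0}. This is the one spot needing a small case check: if $\varphi(v_1)\neq\varphi(v_2)$ we are in the second bullet of Lemma~\ref{lem:g0} (using $\varphi(v_4)\neq\varphi(v_5)$); if instead $\varphi(v_1)=\varphi(v_2)$, then since two colors appear among $v_1,v_2,v_3$ we have $\varphi(v_3)\neq\varphi(v_2)$, and by the first bullet of Lemma~\ref{lem:g1prime} the $G'_1$-part in fact forces $\varphi(v_4)\neq\varphi(v_5)$ but does \emph{not} force $\varphi(v_1)=\varphi(v_2)=\varphi(v_4)$—so one must look again at the figure to confirm the $G_0$-terminals are actually $\{v_1,v_3\}$ versus $\{v_4,v_5\}$ rather than $\{v_1,v_2\}$ versus $\{v_4,v_5\}$; whichever the precise gluing is, the condition of Lemma~\ref{lem:g0} will be met because $G'_1$ already forces the right disequalities. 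I would resolve this by reading the gluing off Figure~\ref{fig:gadget1} and picking the bullet of Lemma~\ref{lem:g0} accordingly.

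For the ``only if'' direction, take any $3$-coloring $\varphi$ of $G_1$. Its restriction to $G'_1$ is a $3$-coloring of $G'_1$, so Lemma~\ref{lem:g1prime} applies: if $\varphi(v_1)=\varphi(v_2)=\varphi(v_3)$ then $\varphi(v_4)=\varphi(v_5)=\varphi(v_2)$, and then the restriction to $G_0$ is a $3$-coloring with $\varphi(v_1)=\varphi(v_2)$ (among its terminals), so by Lemma~\ref{lem:g0} all four terminals get color $\varphi(v_1)$; combined, all of $v_1,\dots,v_5$ get the same color, the first outcome. Otherwise at least two colors appear on $v_1,v_2,v_3$, and then Lemma~\ref{lem:g1prime} gives $\varphi(v_4)\neq\varphi(v_5)$ directly, which is exactly the second outcome; no appeal to $G_0$ is even needed here. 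So the two lemmas together pin down precisely the stated list of possibilities.

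The main obstacle is not mathematical depth but bookkeeping: one has to read the exact identification of the $G_0$-terminals with the $v_i$'s off Figure~\ref{fig:gadget1} (which pair of terminals of $G_0$ is ``opposite'' to which) and make sure the two gadgets' forced behaviors on $v_4,v_5$ are consistent rather than contradictory. Once the gluing is fixed, the rest is a routine intersection of the ``if and only if'' characterizations from Lemma~\ref{lem:g0} and Lemma~\ref{lem:g1prime}, with the single genuine case split being $\varphi(v_1)=\varphi(v_2)$ versus $\varphi(v_1)\neq\varphi(v_2)$ inside the ``two colors appear'' branch.
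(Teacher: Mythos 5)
There is a genuine gap, and it sits exactly at the point you flagged and then waved away. Your assumed decomposition --- a copy of $G'_1$ on $v_1,\ldots,v_5$ together with a copy of $G_0$ whose terminals are identified with $v_1,v_2,v_4,v_5$, so that the vertices $v_4,v_5$ of $G'_1$ are literally the terminals $v_4,v_5$ of $G_1$ --- cannot be correct, because under that gluing the corollary is false: take $\varphi(v_1)=\varphi(v_2)=1$, $\varphi(v_3)=2$, $\varphi(v_4)=2$, $\varphi(v_5)=1$. This coloring satisfies the second bullet of the corollary, yet by Observation~\ref{obs:star} (the fact used in the proof of Lemma~\ref{lem:g1prime}), $\varphi(v_1)=\varphi(v_2)$ forces $\varphi(v_4)=\varphi(v_2)=1$ in \emph{every} $3$-coloring of $G'_1$, so the coloring cannot extend to the $G'_1$-part no matter what $G_0$ does. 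This is precisely the deficiency of $G'_1$ that the paper points out in the sentence immediately preceding the corollary, and it is the entire reason $G_0$ is attached. Consequently your fallback claim, ``whichever the precise gluing is, the condition of Lemma~\ref{lem:g0} will be met because $G'_1$ already forces the right disequalities,'' is not a proof; under the gluing you wrote down, no choice of bullet in Lemma~\ref{lem:g0} rescues the extension, and your ``only if'' direction is likewise argued relative to the wrong identification.

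The intended structure is that $G_0$ is interposed between $G'_1$ and the terminals: the vertices $v_4,v_5$ of $G'_1$ become \emph{internal} vertices of $G_1$, identified with the pair $\{v_1,v_2\}$ of $G_0$, while the other pair $\{v_3,v_4\}$ of $G_0$ plays the role of the terminals $v_4,v_5$ of $G_1$. With this decomposition the two lemmas do combine as you intend: if $v_1,v_2,v_3$ are monochromatic of color $a$, any extension to $G'_1$ gives both internal vertices color $a$, and Lemma~\ref{lem:g0} then allows (first bullet) and forces (equal inputs force all four $G_0$-terminals equal) $\varphi(v_4)=\varphi(v_5)=a$; if at least two colors appear on $v_1,v_2,v_3$, the coloring extends to $G'_1$ with the two internal vertices receiving distinct (possibly constrained) colors, and the second bullet of Lemma~\ref{lem:g0} then permits $v_4,v_5$ to receive \emph{any} two distinct colors --- this is exactly how $G_0$ ``launders'' the constraint you ran into --- while conversely any $3$-coloring of $G_1$ has distinct colors on the internal pair and hence, again by Lemma~\ref{lem:g0}, on $v_4,v_5$. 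Once the gluing is corrected, the rest of your argument (intersecting the characterizations of the two gadgets) is the same short derivation the paper intends.
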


Finally, let us describe the properties of the gadget depicted in Figure \ref{fig:gadget2}.

 \begin{lemma}\label{lem:g2}
 In every $3$-coloring $\varphi$ of the graph $G_2$ depicted in Figure \ref{fig:gadget2}, $\varphi(u_2) = \varphi(u_3)$ and either 
 \begin{itemize}
 \item $\varphi(u_1) = \varphi(u_2)$ and $\varphi(u_3) \not= \varphi(u_4)$ or 
 \item $\varphi(u_1) \not= \varphi(u_2)$ and $\varphi(u_3) = \varphi(u_4)$. 
 \end{itemize}
 Moreover, any 3-coloring of $u_1$, \ldots, $u_4$ which assigns the same color to $u_2$, $u_3$, and exactly one of $u_1$ and $u_4$ extends to a 3-coloring of $G_2$.
 \end{lemma}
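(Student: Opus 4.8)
The plan is to exploit the fact that $G_2$, like the earlier gadgets, is assembled from a handful of copies of the elementary configuration of Observation~\ref{obs:star} (a triangle with two extra vertices) together possibly with copies of $G_0$ and $G_1$, whose colouring behaviour is already pinned down by Lemma~\ref{lem:g0} and Corollary~\ref{cor:g1}. So the first step is to read off from Figure~\ref{fig:gadget2} the exact decomposition of $G_2$ into these pieces and to name its internal vertices; everything afterwards is a matter of propagating colour constraints through that decomposition.

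For the necessity part I would take an arbitrary $3$-colouring $\varphi$ of $G_2$ and argue in two stages. Stage one: locate the subgraph forcing $\varphi(u_2)=\varphi(u_3)$ — most plausibly a short configuration in which two mutually adjacent vertices are each adjacent to both $u_2$ and $u_3$ (so $u_2,u_3$ are both forced to the third colour), or a copy of $G_0$ with two of its terminals identified — and conclude $\varphi(u_2)=\varphi(u_3)$ directly, or via Observation~\ref{obs:star}/Lemma~\ref{lem:g0}. Stage two: writing $c:=\varphi(u_2)=\varphi(u_3)$, trace the chain of constraints linking $u_1$ and $u_4$ back to $c$. I expect that walking along the gadget with repeated applications of Observation~\ref{obs:star} (and Corollary~\ref{cor:g1} wherever a $G_1$-copy occurs) yields precisely $\varphi(u_1)=c \iff \varphi(u_4)\neq c$, i.e.\ the claimed dichotomy. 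Concretely I would split on whether $\varphi(u_1)=c$ and, in each branch, push the forced colours through the gadget until $u_4$'s relation to $c$ is determined; the two branches should be mirror images of one another.

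For the sufficiency (``moreover'') part I would take a colouring of $\{u_1,\dots,u_4\}$ with $\varphi(u_2)=\varphi(u_3)=c$ and exactly one of $u_1,u_4$ coloured $c$, and build the extension by hand. If $G_2$ has the evident symmetry exchanging $u_1$ with $u_4$ (which also swaps the two bullet cases), one sub-case suffices; in any event the two are handled identically, so assume $\varphi(u_1)=c\neq\varphi(u_4)$. Then I colour the internal vertices so that each constituent $F_0$-, $G'_1$-, $G_1$-, or $G_0$-piece receives an input pattern it accepts, invoking the ``extends'' clauses of Lemma~\ref{lem:g0}, Lemma~\ref{lem:g1prime}, and Corollary~\ref{cor:g1}, and finish any leftover vertices greedily, ordering them so that each has at most two already-coloured neighbours when it is coloured.

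The main obstacle I anticipate is purely the bookkeeping in the necessity argument: making sure the chain of forced equalities and inequalities from $u_2,u_3$ out to $u_1$ and $u_4$ is exhaustive and that no colouring case is overlooked — in other words, checking that the gadget genuinely realises the XOR-type constraint and nothing weaker. Once the decomposition is fixed, the existence of the extensions should be a routine verification, and the symmetry between $u_1$ and $u_4$ should roughly halve that work.
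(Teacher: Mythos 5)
The paper's own ``proof'' of this lemma is a single sentence: the easy verification is left to the reader, i.e., a direct finite check on the concrete gadget of Figure~\ref{fig:gadget2}. Your overall plan --- decompose $G_2$ into the elementary pieces of Observation~\ref{obs:star} (and possibly copies of the earlier gadgets), propagate forced colors to obtain $\varphi(u_2)=\varphi(u_3)$ and the XOR between $\varphi(u_1)=\varphi(u_2)$ and $\varphi(u_3)=\varphi(u_4)$, and then exhibit explicit extensions using the $u_1\leftrightarrow u_4$ symmetry --- is exactly the kind of check that is intended, so the approach is not wrong in spirit.

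The genuine gap is that your proposal never actually performs this check. You never fix the vertex and edge structure of $G_2$, so every substantive step remains conjectural: the subgraph forcing $\varphi(u_2)=\varphi(u_3)$ is only guessed at (``most plausibly a short configuration in which\ldots''), the dichotomy between the two bullet cases is only anticipated (``I expect that walking along the gadget\ldots''), and the extending colorings for the ``moreover'' part are never displayed --- the assertion that each constituent piece ``receives an input pattern it accepts'' is made without identifying the pieces or the patterns. In particular, nothing in your argument excludes a $3$-coloring of $G_2$ with $\varphi(u_2)\neq\varphi(u_3)$, or one in which both or neither of the equalities $\varphi(u_1)=\varphi(u_2)$ and $\varphi(u_3)=\varphi(u_4)$ hold; ruling these out is precisely the content of the lemma and precisely what Lemma~\ref{lem:bothmono} depends on. What is missing is therefore not a key idea but the entire verification: a concrete description of $G_2$ followed by the (short, finite) case analysis on it, together with the two explicit extending colorings. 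As written, this is a proof plan rather than a proof.
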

 
 \begin{proof}
 The easy verification is left to the reader. 
 \end{proof}

\section{The construction}

An edge $e$ of $G$ is \emph{critical} if $\chi(G-e)<\chi(G)$.  We are not actually going to explicitly
describe a 4-critical planar graph of the required
density.  Rather, we are going to show that the graph
that we construct has many critical edges.  This
is sufficient by the following observation.
 
 \begin{lemma}\label{lem:critical}
Let $G$ be a graph of chromatic number exactly $k$, and let $H$ be a $k$-critical subgraph of $G$.  Then $H$ contains all critical edges of $G$.
 \end{lemma}
 \begin{proof}
 Consider any edge $e\in E(G)\setminus E(H)$.  We have
 $\chi(G-e)\ge \chi(H)=k=\chi(G)$, and thus $e$ is not critical.
 \end{proof}
 
 \begin{figure}
\begin{center}\includegraphics[width=1\textwidth]{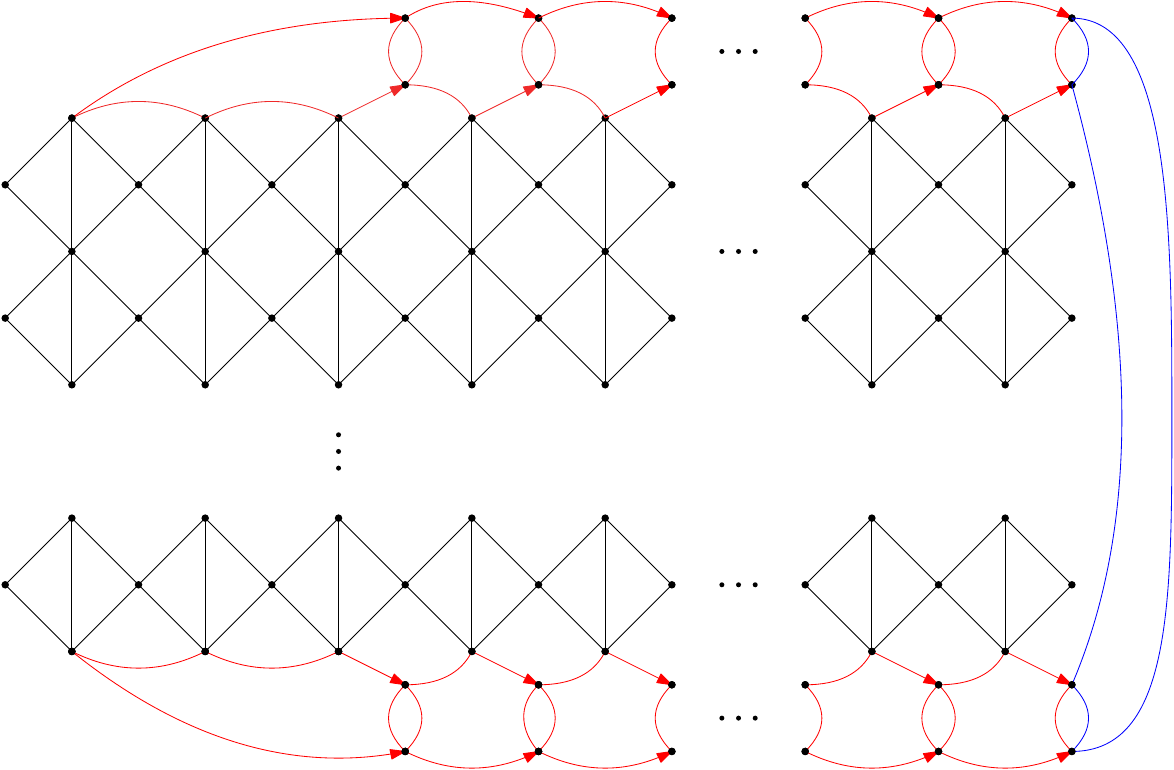}\end{center}
\caption{The graph $G_n$}
\label{fig:construction}
\end{figure}

For an integer $n \geq 4$, let $G_n$ be the graph that is obtained from 
\begin{itemize}
\item the graph $L_n$,  
\item $2(n- 2)$ copies $F_1, \dots, F_{n-2}, H_1, \dots, H_{n-2}$ of $G_1$ and
\item one copy $H$ of $G_2$
\end{itemize}
 as follows; see also Figure \ref{fig:construction} for an illustration. Let $V^\star(G_1) = \{v_1, v_2, v_3, v_4, v_5\}$ and $V^*(G_2) = \{u_1, u_2, u_3, u_4\}$. For $i = 4, \dots, n$, we set
\begin{gather*}
V^\star(F_{i-2}) = \{s_{i-2}, t_{i-2}, x_{1,i}, s_{i-1}, t_{i-1}\} \\
V^\star(H_{i-2}) = \{p_{i-2}, q_{i-2}, x_{n,i}, p_{i-1}, q_{i-1}\}
\end{gather*}
such that, \emph{in} $F_{i-2}$ and $H_{i-2}$, the vertices $s_{i-2}, t_{i-2}, s_{i-1}, t_{i-1}$ and the vertices $p_{i-2}, q_{i-2}, p_{i-1}, q_{i-1}$ stand for the vertices $v_1, v_2, v_4, v_5$, respectively,  in $G_1$, and where the vertices $x_{1, i}$ and $x_{n, i}$ are members of $V(L_n)$. We also set
\begin{gather*}
V^\star(F_1) = \{x_{1, 1}, x_{1, 2}, x_{1, 3}, s_2, t_2\} \\
V^\star(H_1) = \{x_{n, 1}, x_{n, 2}, x_{n, 3},p_2, q_2\} 
\end{gather*}
To complete the construction of $G_n$, we finally set \begin{gather*}
V^\star(H) = \{s_{n-1}, t_{n-1}, p_{n-1}, q_{n-1}\},
\end{gather*}
where $s_{n-1}, t_{n-1}, p_{n-1}, q_{n-1}$ stand for $u_1, u_2, u_3, u_4$, respectively, in $G_2$.  

We say that a $3$-coloring of a set of vertices is \emph{monochromatic} if all vertices in the set
receive the same color, and \emph{non-monochromatic} otherwise.
The key 3-coloring property of this construction is given in the following lemma.
\begin{lemma}\label{lem:bothmono}
Let $n\ge 4$ be an integer and let $X$ be a subset of $E(L_n)$.  A 3-coloring $\varphi$ of $L_n-X$
extends to a 3-coloring of $G_n-X$ if and only if $\varphi$ is monochromatic on exactly one of the
sets $\{x_{1,1},\ldots, x_{1,n}\}$ and $\{x_{n,1},\ldots, x_{n,n}\}$.
\end{lemma}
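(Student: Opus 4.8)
The plan is to chase the colorings through the construction from the outside in, using the gadget lemmas as black boxes. Fix $n\ge 4$, a set $X\subseteq E(L_n)$, and a $3$-coloring $\varphi$ of $L_n-X$. Write $R=\{x_{1,1},\ldots,x_{1,n}\}$ for the top row and $S=\{x_{n,1},\ldots,x_{n,n}\}$ for the bottom row. The key auxiliary observation, which I would state and prove first, is a "propagation" fact along the chain $F_1,\ldots,F_{n-2}$ together with the terminal gadget $H$: for each $i\in\{1,\ldots,n-2\}$ let us call the pair $(s_i,t_i)$ \emph{fused} in a coloring if $\varphi(s_i)=\varphi(t_i)$ and \emph{split} otherwise, and similarly for the pairs $(p_i,q_i)$ on the bottom chain. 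Using Corollary~\ref{cor:g1} on $F_1$, the pair $(s_2,t_2)$ is fused in any extension iff $\{x_{1,1},x_{1,2},x_{1,3}\}$ is monochromatic; using Corollary~\ref{cor:g1} on $F_{i-2}$ for $i=4,\ldots,n$, the pair $(s_{i-1},t_{i-1})$ is fused iff the pair $(s_{i-2},t_{i-2})$ is fused \emph{and} $\varphi(x_{1,i})$ equals the common color of $s_{i-2},t_{i-2}$ — and moreover an extension to $F_{i-2}$ exists iff either all of $s_{i-2},t_{i-2},x_{1,i},s_{i-1},t_{i-1}$ agree, or $(s_{i-2},t_{i-2})$ is non-monochromatic (equivalently split) together with $(s_{i-1},t_{i-1})$ being split. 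Chaining this, an extension to $F_1,\ldots,F_{n-2}$ exists and leaves $(s_{n-1},t_{n-1})$ fused precisely when $R$ is monochromatic; otherwise, if an extension exists at all it leaves $(s_{n-1},t_{n-1})$ split. The same analysis on $H_1,\ldots,H_{n-2}$ gives the corresponding statement for $(p_{n-1},q_{n-1})$ and the bottom row $S$.

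The second ingredient is Lemma~\ref{lem:g2} applied to the single copy $H$ of $G_2$, whose boundary is $(u_1,u_2,u_3,u_4)=(s_{n-1},t_{n-1},p_{n-1},q_{n-1})$. It forces $\varphi(u_2)=\varphi(u_3)$, i.e.\ $\varphi(t_{n-1})=\varphi(p_{n-1})$, and then exactly one of the two pairs $(u_1,u_2)=(s_{n-1},t_{n-1})$ and $(u_3,u_4)=(p_{n-1},q_{n-1})$ is fused; conversely any boundary coloring with $\varphi(t_{n-1})=\varphi(p_{n-1})$ and exactly one of these two pairs fused extends to $H$. The plan is then to combine the three pieces: an extension of $\varphi$ to all of $G_n-X$ exists iff we can simultaneously extend over the top chain, the bottom chain, and $H$. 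By the propagation fact, "top chain extendable" forces $(s_{n-1},t_{n-1})$ to be fused when $R$ is monochromatic and split otherwise (when extendable), and symmetrically for the bottom; by Lemma~\ref{lem:g2} exactly one of the two terminal pairs is fused. Hence a joint extension exists iff exactly one of $R$, $S$ is monochromatic: if $R$ is monochromatic and $S$ is not, extend the top chain leaving $(s_{n-1},t_{n-1})$ fused, extend the bottom chain leaving $(p_{n-1},q_{n-1})$ split, choose the common color on $t_{n-1},p_{n-1}$ consistently (using that one pair fused and the other split is exactly the degree of freedom Lemma~\ref{lem:g2} allows, and that Corollary~\ref{cor:g1} and Lemma~\ref{lem:g1prime} leave the common stripe/row colors free enough to match), then apply the "moreover" parts of Corollary~\ref{cor:g1} and Lemma~\ref{lem:g2} to realize the extension; the case $S$ monochromatic, $R$ not, is symmetric; and if both or neither are monochromatic, the forced fused/split pattern at $(s_{n-1},t_{n-1},p_{n-1},q_{n-1})$ contradicts Lemma~\ref{lem:g2}.

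The one genuinely delicate point — and the step I expect to be the main obstacle — is checking that the \emph{constraints imposed on $\varphi$ by $L_n$ itself} are consistent with whatever row pattern we are in, i.e.\ that the hypothesis "$\varphi$ monochromatic on exactly one of $R,S$" is not vacuous or over-determined relative to Lemma~\ref{lem:L}. Here one must be careful: Lemma~\ref{lem:L} is about colorings of the \emph{full} $L_n$, whereas we work with $L_n-X$ for an arbitrary edge subset $X$, so the rigid correspondence "$\varphi(x_{1,j_1})=\varphi(x_{1,j_2})\iff\varphi(x_{n,j_1})=\varphi(x_{n,j_2})$" need not hold and indeed $R$ and $S$ can now be independently monochromatic or not — which is exactly why deleting edges of $L_n$ is what makes the eventual graph critical. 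So the argument must \emph{not} invoke Lemma~\ref{lem:L} for the equivalence; it only uses the gadget lemmas, applied to the given coloring of $L_n-X$ restricted to the boundary vertices $x_{1,i}$ and $x_{n,i}$, treating their colors as arbitrary inputs. The remaining bookkeeping — verifying that the "moreover, every $3$-coloring of $\{v_1,v_2,v_3\}$ extends" clauses can be invoked at each link of the chain in the right order, and that the free colors can be threaded through consistently — is routine but should be written out carefully, proceeding from $F_1$ toward $H$ on top, from $H_1$ toward $H$ on the bottom, and finishing at $H$.
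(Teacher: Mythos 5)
Your proposal is correct and follows essentially the same route as the paper: extend the top and bottom row colorings through the chains $F_1,\ldots,F_{n-2}$ and $H_1,\ldots,H_{n-2}$ via Corollary~\ref{cor:g1}, observe that the terminal pair is forced equal exactly when the corresponding row is monochromatic (with full freedom over ordered pairs of distinct colors otherwise), and conclude with Lemma~\ref{lem:g2} at $H$. The paper makes your "free enough to match" step explicit by recording which extensions $\varphi_{b,c}$, $\varphi'_{b,c}$ of the chain colorings exist, but the argument is the same.
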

\begin{proof}
By Corollary~\ref{cor:g1}, $\varphi\restriction\{x_{1,1},\ldots, x_{1,n}\}$ can always be
extended to $F_1$, \ldots, $F_{n-2}$.
For $b,c\in [3]$, let $\varphi_{b,c}$ be such an extension with $\varphi_{b,c}(s_{n-1})=b$ and $\varphi_{b,c}(t_{n-1})=c$,
if one exists.  Corollary~\ref{cor:g1} implies that
\begin{itemize}
\item if $\{x_{1,1},\ldots, x_{1,n}\}$ is monochromatic and all its vertices receive color $a$,
then only $\varphi_{a,a}$ exists, and
\item if $\{x_{1,1},\ldots, x_{1,n}\}$ is non-monochromatic, then $\varphi_{b,c}$ exists exactly for
the colors $b,c\in [3]$ such that $b\neq c$.
\end{itemize}
Similarly, letting $\varphi'_{b,c}$ be an extension of $\varphi\restriction\{x_{n,1},\ldots, x_{n,n}\}$
to $H_1$, \ldots, $H_{n-2}$ with $\varphi'_{b,c}(p_{n-1})=b$ and $\varphi'_{b,c}(q_{n-1})=c$, if one exists,
and note that
\begin{itemize}
\item if $\{x_{n,1},\ldots, x_{n,n}\}$ is monochromatic and all its vertices receive color $a$,
then only $\varphi'_{a,a}$ exists, and
\item if $\{x_{n,1},\ldots, x_{n,n}\}$ is non-monochromatic, then $\varphi'_{b,c}$ exists exactly for
the colors $b,c\in [3]$ such that $b\neq c$.
\end{itemize}

Let $G'_n=(L_n-X)\cup F_1\cup\ldots\cup F_{n-2}\cup H_1\cup\ldots\cup H_{n-2}$, and let $\varphi'$ be
any extension of $\varphi$ to $G'_n$.
By the previous observations, if both $\{x_{1,1},\ldots, x_{1,n}\}$ and $\{x_{n,1},\ldots, x_{n,n}\}$ are monochromatic,
then $\varphi'(s_{n-1})=\varphi'(t_{n-1})$ and $\varphi'(p_{n-1})=\varphi'(q_{n-1})$ and by Lemma~\ref{lem:g2}, $\varphi'$ does not extend
to $H$.  If neither is monochromatic, then $\varphi'(s_{n-1})\neq\varphi'(t_{n-1})$ and $\varphi'(p_{n-1})\neq\varphi'(q_{n-1})$
and by Lemma~\ref{lem:g2}, $\varphi'$ again does not extend to $H$.

Hence, suppose that exactly one of $\{x_{1,1},\ldots, x_{1,n}\}$ and $\{x_{n,1},\ldots, x_{n,n}\}$ is monochromatic;
say all vertices of $\{x_{1,1},\ldots, x_{1,n}\}$ receive color $a$.  Let $b$ be any color in $[3]\setminus\{a\}$.
By Lemma~\ref{lem:g2}, $\varphi\cup \varphi_{a,a}\cup \varphi'_{a,b}$ extends to $H$, giving a $3$-coloring of $G_n-X$ extending $\varphi$.
\end{proof}

From this, it is easy to see that $G_n$ is not $3$-colorable.

\begin{lemma}\label{claim:0}
For every $n\ge 4$, the graph $G_n$ has chromatic number
four.
\end{lemma}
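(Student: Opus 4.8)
\textbf{Proof plan for Lemma~\ref{claim:0}.}

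The plan is to use Lemma~\ref{lem:bothmono} with $X=\emptyset$ to show that $G_n$ has no $3$-coloring, and then exhibit a single $4$-coloring to pin down $\chi(G_n)=4$. First I would observe that a $3$-coloring of $G_n$ restricts to a $3$-coloring $\varphi$ of $L_n=L_n-\emptyset$ which, by Lemma~\ref{lem:bothmono}, must be monochromatic on exactly one of $\{x_{1,1},\ldots,x_{1,n}\}$ and $\{x_{n,1},\ldots,x_{n,n}\}$. But Lemma~\ref{lem:L} says that $\varphi(x_{1,j_1})=\varphi(x_{1,j_2})$ if and only if $\varphi(x_{n,j_1})=\varphi(x_{n,j_2})$ for all $j_1,j_2\in[n]$; taking $j_1$ and $j_2$ to range over all pairs, this forces $\{x_{1,1},\ldots,x_{1,n}\}$ to be monochromatic precisely when $\{x_{n,1},\ldots,x_{n,n}\}$ is. So the two sets are monochromatic simultaneously or not at all, contradicting the ``exactly one'' conclusion of Lemma~\ref{lem:bothmono}. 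Hence $G_n$ is not $3$-colorable, i.e.\ $\chi(G_n)\ge 4$.

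For the matching upper bound I would exhibit a proper $4$-coloring of $G_n$. The cleanest route is to $4$-color $L_n$ so that the sets $\{x_{1,j}\}$ and $\{x_{n,j}\}$ are treated asymmetrically — e.g.\ make $\{x_{1,1},\ldots,x_{1,n}\}$ monochromatic (color all of them $1$, color the $i$-th stripe $2$, and color $x_{i,j}$ with $1$ or $3$ alternately by parity of $i$, which is a proper $4$-coloring of $L_n$ that even uses only $3$ colors) while $\{x_{n,1},\ldots,x_{n,n}\}$ is non-monochromatic. Then Lemma~\ref{lem:bothmono} (with $X=\emptyset$) guarantees that this $3$-coloring of $L_n$ extends to a $3$-coloring of $G_n$, and a $3$-coloring is in particular a $4$-coloring. (Alternatively, one simply invokes Lemma~\ref{lem:bothmono}: any $3$-coloring of $L_n$ in which exactly one side is monochromatic — which exists, since $L_n$ is visibly $3$-colorable with, say, the first row constant — extends, so $\chi(G_n)\le 3$ is \emph{false} only because of the conflict above; the extension shows $\chi(G_n)\le 4$ once we allow the fourth color to break the remaining symmetry.) Combining with $\chi(G_n)\ge 4$ gives $\chi(G_n)=4$.

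The main obstacle I anticipate is purely bookkeeping: checking that the proposed $4$-coloring of $L_n$ (or the asymmetric $3$-coloring of $L_n$ one feeds into Lemma~\ref{lem:bothmono}) is actually proper on $L_n$ — that consecutive stripes and rows get distinct colors according to the adjacencies in Figure~\ref{fig:lattice} — and that it indeed makes exactly one of the two boundary rows monochromatic so that Lemma~\ref{lem:bothmono} applies. No new idea is needed; the substantive content has already been packaged into Lemmas~\ref{lem:L} and~\ref{lem:bothmono}, so the proof should be short.
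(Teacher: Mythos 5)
Your lower-bound argument is exactly the paper's: restrict a hypothetical $3$-coloring of $G_n$ to $L_n$, apply Lemma~\ref{lem:bothmono} with $X=\emptyset$ to conclude that exactly one of $\{x_{1,1},\ldots,x_{1,n}\}$ and $\{x_{n,1},\ldots,x_{n,n}\}$ is monochromatic, and contradict Lemma~\ref{lem:L}. That part is fine.

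The upper bound, however, contains a genuine error. You propose to feed into Lemma~\ref{lem:bothmono} a proper $3$-coloring of $L_n$ in which $\{x_{1,1},\ldots,x_{1,n}\}$ is monochromatic while $\{x_{n,1},\ldots,x_{n,n}\}$ is not. No such coloring exists: by Lemma~\ref{lem:L}, the two boundary rows of $L_n$ are monochromatic simultaneously or not at all --- this is precisely the fact your own lower-bound argument exploits, and if such a coloring did exist it would extend to a $3$-coloring of $G_n$, contradicting $\chi(G_n)\ge 4$. Indeed, the explicit coloring you describe (first row colored $1$, all stripes colored $2$, row $i$ colored $1$ or $3$ by parity) makes \emph{both} boundary rows monochromatic, so Lemma~\ref{lem:bothmono} does not let you extend it, and your parenthetical fallback (``allow the fourth color to break the remaining symmetry'') is not an argument: Lemma~\ref{lem:bothmono} only speaks about $3$-colorings, and you never actually produce a proper $4$-coloring of the gadgets $F_i$, $H_i$, $H$. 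The paper closes the upper bound in one line: $G_n$ is planar by construction, so the Four Color Theorem gives $\chi(G_n)\le 4$. (Alternatively, one could note that $G_n-e$ is $3$-colorable for a suitable edge $e$, as in Lemma~\ref{claim:1}, whence $\chi(G_n)\le 4$ by recoloring one endpoint of $e$ with a fourth color; but some such argument is needed, and your proposal as written does not supply one.)
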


\begin{proof}
Since $G$ is planar, it is 4-colorable.
If $\varphi$ were a $3$-coloring of $G_n$, then
by Lemma~\ref{lem:bothmono} with $X=\emptyset$, exactly one of the sets $\{x_{1,1},\ldots, x_{1,n}\}$
and $\{x_{n,1},\ldots, x_{n,n}\}$ would be monochromatic.  However, this contradicts Lemma~\ref{lem:L}.
\end{proof} 

Next, we need to show that most of the edges of the copy of $L_n$ in $G_n$ are
critical.  Up to symmetry, $L_n$ has two types of edges (vertical and
diagonal), and we discuss each type separately, starting with the vertical
ones.

For $s, t \in [n]$, a coloring of $\{x_{t,1}, \dots, x_{t,n}\}$ is a \emph{$(t;
a; s;  b)$-motif} if it colors vertices $x_{t,1}, \dots, x_{t,s-1}, x_{t,s+1},
\dots, x_{t,n}$ with $a$ and  vertex $x_{t,s}$ with $b$.
A coloring of all vertices of $\{x_{t,1}, \dots, x_{t,n}\}$ by the same color $a$
is called a \emph{$(t;a)$-smear}. 

\begin{lemma}\label{claim:1}
For every $n\ge 4$, $i\in[n-1]$, and $j\in[n]$, the edge $e=x_{i, j}x_{i+1, j}$ of $G_n$ is critical.
\end{lemma}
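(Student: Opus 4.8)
The goal is to show that the vertical edge $e = x_{i,j}x_{i+1,j}$ is critical in $G_n$, i.e., that $G_n - e$ has a $3$-coloring. By Lemma~\ref{claim:0} we know $\chi(G_n) = 4$, so it suffices to exhibit a proper $3$-coloring of $G_n - e$. By Lemma~\ref{lem:bothmono} applied with $X = \{e\} \subseteq E(L_n)$, it is enough to construct a $3$-coloring $\varphi$ of $L_n - e$ that is monochromatic on exactly one of $\{x_{1,1},\dots,x_{1,n}\}$ and $\{x_{n,1},\dots,x_{n,n}\}$; such a $\varphi$ then extends to $G_n - e$. So the whole problem reduces to a statement purely about $L_n$: deleting one vertical edge allows a $3$-coloring of the lattice whose top row is monochromatic while its bottom row is not (or vice versa).

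\textbf{Constructing the coloring of $L_n - e$.} The idea is to run the stripe-by-stripe analysis from the proof of Lemma~\ref{lem:L} in reverse, using the deleted edge to introduce the needed ``defect.'' Recall from Lemma~\ref{lem:L} that in any $3$-coloring of the full $L_n$, each stripe $\{y_{k,1},\dots,y_{k,n+1}\}$ is monochromatic with some color $c_k$, and the color pattern of row $k+1$ is forced from that of row $k$ together with $c_k$. I would color the top rows $1, \dots, i$ all with the $(k; a)$-smear for some fixed color $a$ (all $x$-vertices in these rows get color $a$), and color stripes $1, \dots, i-1$ each with a single color avoiding $a$ — say alternating between the other two colors — which is consistent since each $y_{k,\ell}$ is adjacent only to $x$-vertices colored $a$ and to $y_{k,\ell'}$? (No: distinct vertices in the same stripe are non-adjacent, so a stripe really can be monochromatic.) Now at the $i$-th stripe, between row $i$ (all color $a$) and row $i+1$: color the whole $i$-th stripe with a color $c_i \neq a$, and color row $i+1$ vertex-by-vertex. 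For $j' \neq j$, the vertex $x_{i+1,j'}$ sees $x_{i,j'}$ (color $a$) and two stripe vertices (color $c_i$), forcing $x_{i+1,j'}$ to be the third color $b = [3]\setminus\{a,c_i\}$. But $x_{i+1,j}$ has lost its edge to $x_{i,j}$, so it only sees stripe vertices of color $c_i$, and we may freely set $\varphi(x_{i+1,j}) = a$ instead of $b$. Thus row $i+1$ receives the non-monochromatic $(i+1; b; j; a)$-motif, and — applying the forcing of Lemma~\ref{lem:L} from row $i+1$ downward through stripes $i+1,\dots,n-1$ — every row $i+1, i+2, \dots, n$ is non-monochromatic, in particular row $n$. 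So the top row is monochromatic and the bottom row is not, as needed. (One should double-check the boundary cases $i=1$ and $i = n-1$, and that the forward propagation never creates an adjacency conflict, which it does not since we are simply reading off the unique completion allowed by the lattice structure.)

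\textbf{Where the work is.} The conceptual content is entirely in the reduction via Lemma~\ref{lem:bothmono}; once that is in hand, the construction above is the natural ``one defect'' coloring. The main thing to be careful about is the bookkeeping for the stripes and the motif propagation: I need to verify that a $(t; a)$-smear in rows $1,\dots,i$ together with arbitrary (non-$a$) stripe colors is genuinely a proper coloring (it is, because stripe vertices are mutually non-adjacent and each is adjacent only to smear vertices of the common color $a$, which differs from the stripe color), and that introducing the motif at row $i+1$ propagates to a motif — equivalently a non-monochromatic pattern — at every subsequent row. The latter is exactly the ``only if'' direction of the equivalence in Lemma~\ref{lem:L} read along rows $i+1$ through $n$: since row $i+1$ is non-monochromatic, so is each later row. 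I expect the main obstacle, such as it is, to be writing this propagation cleanly in the motif/smear language the paper has just set up, and handling the degenerate indices; there is no real difficulty beyond that.
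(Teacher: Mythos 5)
Your reduction is exactly the paper's: apply Lemma~\ref{lem:bothmono} with $X=\{e\}$ and build a $3$-coloring of $L_n-e$ whose row $1$ is monochromatic and whose row $n$ is not, using the missing edge to let $x_{i+1,j}$ deviate from the forced color. However, your concrete coloring has a genuine error in the smear block. You color \emph{all} rows $1,\dots,i$ with the same color $a$, but consecutive rows of $L_n$ are joined by the vertical edges $x_{k,j'}x_{k+1,j'}$ (these are precisely the edges this lemma is about, so they are present for every $k\in[n-1]$ and $j'\in[n]$). Hence for every $i\ge 2$ your coloring puts the same color on both ends of each vertical edge inside rows $1,\dots,i$, and it is not a proper coloring; checking only the stripe vertices' neighborhoods, as you do, misses these edges. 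The paper avoids this by alternating the smear colors on consecutive rows ($(k;a)$-smear, $(k;3-a)$-smear, \dots) while giving all stripes the third color $3$, which is also why it draws the motif colors from $\{1,2\}$ only.

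The fix is routine and keeps your architecture intact: alternate the smear colors on rows $i,i-1,\dots,1$ (starting with the color of $x_{i+1,j}$ on row $i$, which the deleted edge permits) and use a single fixed color on every stripe; equivalently, just adopt the paper's bookkeeping, which builds the motifs on rows $n,\dots,i+1$ first and then the alternating smears above. One further small point: your downward propagation through rows $i+1,\dots,n$ is not literally ``the unique completion'' --- stripe vertices away from column $j$ have some freedom --- but choosing the third color on every stripe below row $i$ makes the forced motifs alternate properly, so that part of your argument is sound once stated with an explicit stripe-color choice.
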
  

\begin{proof}
We need to describe a $3$-coloring $\varphi$ of $G_n-e$.

On $L_n-e$, the 3-coloring $\varphi$ is defined as follows.
For $k = n, \dots, i+1$ in order, alternate between a $(k; 1; j; 2)$-motif and
a $(k; 2; j; 1)$-motif.  Let $a\in\{1,2\}$ be the color assigned to $x_{i+1, j}$.
For $k = i, \dots, 1$ in order, alternate between a $(k; a)$-smear and a $(k; 3-a)$-smear
(using the $(i;a)$-smear is possible, since the edge $e=x_{i, j}x_{i+1, j}$ is missing).
The vertices of all stripes are given color $3$.

Thus, the set $\{x_{1,1},\ldots,x_{1,n}\}$ is monochromatic and the set $\{x_{n,1},\ldots,x_{n,n}\}$
is non-monochromatic.  By Lemma~\ref{lem:bothmono} with $X=\{e\}$, $\varphi$ extends to a $3$-coloring of $G_n-e$.
\end{proof}

Next, we consider the \emph{diagonal} edges, i.e., the edges joining a vertex $x_{i,j}$ with a vertex $y_{i',j'}$
for some indices $i,j\in[n]$, $i'\in[n-1]$, and $j'\in[n+1]$.
For $s, t \in [n]$, a coloring of $\{x_{t,1}, \dots, x_{t,n}\}$ is a \emph{$(t; a; s;  b)$-pattern} 
if it colors vertices $x_{t,1}, \dots, x_{t,s}$ with $a$ and vertices $x_{t,s+1}, \dots, x_{t,n}$ with $b$.

\begin{lemma}\label{claim:2}
For every $n\ge 4$, every diagonal edge $e$ of $G_n$ not incident with a vertex of degree two is critical.
\end{lemma}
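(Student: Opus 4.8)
The plan is to mimic the proof of Lemma~\ref{claim:1}: to show a diagonal edge $e=x_{i,j}y_{i',j'}$ is critical, I will exhibit a $3$-coloring of $G_n-e$, which by Lemma~\ref{lem:bothmono} (applied with $X=\{e\}$) amounts to producing a $3$-coloring $\varphi$ of $L_n-e$ that is monochromatic on exactly one of $\{x_{1,1},\dots,x_{1,n}\}$ and $\{x_{n,1},\dots,x_{n,n}\}$. The diagonal edge connects a vertex $x_{i,j}$ to a vertex $y_{i',j'}$ in an adjacent stripe, so $i'\in\{i-1,i\}$; the hypothesis that $e$ is not incident with a degree-two vertex rules out the boundary cases where $y_{i',j'}$ has only two neighbors (the corners of the stripes), so I may assume $y_{i',j'}$ is an ``interior'' stripe vertex with four neighbors, two in row $i$ and two in row $i\pm1$. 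By the up–down symmetry of $L_n$ I may assume $i'=i$, so the missing edge is between $x_{i,j}$ and a vertex $y_{i,j'}$ of the $i$-th stripe with $j'\in\{j,j+1\}$.

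The coloring strategy: rows $1,\dots,i$ will all be smeared (monochromatic), and rows $i+1,\dots,n$ will each carry a motif, so that $\{x_{1,\cdot}\}$ is monochromatic and $\{x_{n,\cdot}\}$ is not. The subtlety, compared to Lemma~\ref{claim:1}, is that here the deleted edge is a stripe edge rather than a vertical edge, so I get freedom in coloring one vertex $y_{i,j'}$ of the $i$-th stripe rather than freedom between two rows of $x$-vertices. Concretely: color all of rows $1,\dots,i$ with color $1$ (a $(k;1)$-smear for each $k\le i$); color every stripe vertex with color $3$, \emph{except} $y_{i,j'}$ which we leave to be colored later. Now the $(i+1)$-th row and everything below must be colored so as to make row $n$ non-monochromatic; use a $(k;1;j;2)$-motif on row $i+1$ and alternate motifs down to row $n$ as in Lemma~\ref{claim:1}'s proof (with the roles of $1$ and $2$ swapped on alternate rows). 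The only place properness could fail is at the stripe between rows $i$ and $i+1$: the $i$-th stripe is colored $3$ everywhere except at $y_{i,j'}$, and row $i+1$ uses colors $1,2$; since $y_{i,j'}$ is not adjacent to $x_{i,j}$ (that edge is deleted) but \emph{is} adjacent to $x_{i,j'}$ or $x_{i,j'-1}$ in row $i$ (all colored $1$) and to two vertices of row $i+1$, I just need a color for $y_{i,j'}$ avoiding $1$ and the two colors it sees in row $i+1$; if those two row-$(i+1)$ neighbours have colors $\{1,2\}$ there is no room, so instead I assign color $2$ to $y_{i,j'}$ and arrange the motif on row $i+1$ so that the two neighbours of $y_{i,j'}$ both receive color $1$ — this is possible because $y_{i,j'}$'s row-$(i+1)$ neighbours are two consecutive $x_{i+1,\ell},x_{i+1,\ell+1}$, and I have freedom to place the ``$j$'' of the motif away from $\{\ell,\ell+1\}$ unless $j\in\{\ell,\ell+1\}$, in which case a small local adjustment (shifting which index of row $i+1$ is the exceptional one) handles it.

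The main obstacle I anticipate is this last bookkeeping: matching up the single free stripe vertex $y_{i,j'}$ with the motif on row $i+1$ so that properness holds at every diagonal edge incident to $y_{i,j'}$, while still guaranteeing row $n$ is non-monochromatic and row $1$ is monochromatic. One must check a few index cases (whether $j'=j$ or $j'=j+1$, and whether the special index of the row-$(i+1)$ motif collides with $y_{i,j'}$'s neighbours), and verify that the exceptional degree-two corner vertices are exactly the ones excluded by hypothesis so that $y_{i,j'}$ really does have a free color available. Once the coloring of $L_n-e$ is in hand and verified proper with the stated monochromaticity pattern, Lemma~\ref{lem:bothmono} immediately extends it to a $3$-coloring of $G_n-e$, so $e$ is critical. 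I expect the write-up to be a direct but slightly fiddly case analysis, structurally parallel to Lemma~\ref{claim:1} but with the ``slack'' living on a stripe vertex instead of between two rows.
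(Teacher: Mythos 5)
Your overall frame (produce a $3$-coloring of $L_n-e$ that is monochromatic on exactly one of rows $1$ and $n$, then invoke Lemma~\ref{lem:bothmono} with $X=\{e\}$) is the right one and matches the paper, but the coloring you propose does not work, and the failure is not just bookkeeping. First, a local error: smearing \emph{all} of rows $1,\dots,i$ with color $1$ violates the vertical edges $x_{k,\ell}x_{k+1,\ell}$ inside that block, and your claim that ``the only place properness could fail is at the stripe between rows $i$ and $i+1$'' overlooks the vertical edges between rows $i$ and $i+1$, none of which are deleted: a $(i+1;1;j;2)$-motif under a row $i$ smeared with $1$ clashes at every column except the exceptional one.

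Second, and more seriously, the ``one exceptional stripe vertex'' idea cannot be repaired by shifting indices. Suppose row $i$ is monochromatic in some color $c$ and stripe $i$ is colored $3$ everywhere except at $y_{i,j'}$, which gets $d$. Every vertex $x_{i+1,\ell}$ with $\ell\notin\{j'-1,j'\}$ sees two stripe vertices colored $3$ on each side and $x_{i,\ell}$ colored $c$, so it is forced to the unique third color; and $x_{i+1,j'-1},x_{i+1,j'}$ additionally see $d$, so they are colorable only if $d\in\{3,c\}$. Since $y_{i,j'}$ is still adjacent to one vertex of row $i$ (only the edge to $x_{i,j}$ is deleted), $d\neq c$, forcing $d=3$; then row $i+1$ is monochromatic, and Lemma~\ref{lem:L} applied to rows $i+1,\dots,n$ (which contain no deleted edges) makes row $n$ monochromatic as well, so the hypothesis of Lemma~\ref{lem:bothmono} can never be met by your scheme. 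The actual slack created by deleting $x_{i,j}y_{i,j'}$ is different: the stripe-forcing argument breaks only between $y_{i,j}$ and $y_{i,j+1}$, so that stripe may be split into two monochromatic blocks $\{y_{i,1},\dots,y_{i,j}\}$ and $\{y_{i,j+1},\dots,y_{i,n+1}\}$, and exploiting this requires coloring the rows on the non-monochromatic side with prefix/suffix \emph{patterns} split at column $j$ (as the paper does, via $(k;a;j;b)$-patterns with alternating $a,b\in\{1,2\}$ and the split stripe adjacent to the deleted edge being the unique would-be conflict), rather than single-exception motifs. As written, your proof does not establish the lemma.
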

\begin{proof}
By symmetry, we can assume that $e=x_{i, j}y_{i-1, j+1}$, where $i\in [n]\setminus\{1\}$ and $j\in[n-1]$.
We need to describe a $3$-coloring $\varphi$ of $G_n-e$.

On $L_n-e$, the 3-coloring $c$ is defined as follows.
For $k = n, \dots, i$ in order, alternate between a $(k; 1; j; 2)$-pattern and a
$(k; 2; j; 1)$-pattern.   
For $k = i-1, \dots, 1$ in order, alternate between a $(k; 3)$-smear and a $(k; 2)$-smear.
The vertices of stripes $n-1$, \ldots, $i$ get color $3$.
Let $a\in \{1,2\}$ be the color assigned to $x_{i, j}$.  The vertices $\{y_{i-1,1},\ldots,y_{i-1,j}\}$
get color $3-a$ and the vertices $\{y_{i-1,j+1},\ldots,y_{i-1,n+1}\}$ get color $a$ (this is possible,
since the edge $e=x_{i, j}y_{i-1, j+1}$ is missing). The vertices of stripes $i-2$, \ldots, $1$
get color $1$.

Thus, the set $\{x_{1,1},\ldots,x_{1,n}\}$ is monochromatic and the set $\{x_{n,1},\ldots,x_{n,n}\}$
is non-monochromatic.  By Lemma~\ref{lem:bothmono} with $X=\{e\}$, $\varphi$ extends to a $3$-coloring of $G_n-e$.
\end{proof}

We are now ready to prove the existence of 4-critical graphs of the required density.

\begin{proof}[Proof of Theorem~\ref{thm:main}]
Let $k_1$ be the number of vertices of $G_1$ and $k_2$ the number of vertices of $G_2$.
Note that for every $n\ge 4$, we have
\begin{align*}
|V(G_n)|&=|V(L_n)|+2(n-2)(k_1-3)+k_2-4\\&=2n^2+2(n-2)(k_1-3)+k_2-5\\
&=2n^2+an+b,
\end{align*}
where $a=2(k_1-3)$ and $b=k_2-4k_1+7$.
Let $Q_n$ be a 4-critical subgraph of $G_n$; by Lemma~\ref{lem:critical}, $Q_n$ contains
all critical edges of $G_n$, and by Lemmas~\ref{claim:1} and \ref{claim:2}, we conclude
that
$$|E(Q_n)|\ge (n-1)n+4(n-1)^2=5n^2-9n+4.$$
Therefore,
$$\frac{|E(Q_n)|}{|V(Q_n)|}\ge \frac{|E(Q_n)|}{|V(G_n)|}\ge \frac{5n^2-9n+4}{2n^2+an+b}\ge 2.5-O(1/n).$$
It follows that
$$L=\limsup_{\text{$G$ 4-critical}} \frac{E(G)}{V(G)}\ge \limsup_{n\ge 4} \frac{E(Q_n)}{V(Q_n)}\ge 2.5.$$
The bound $L\le 2.5$ has been proven by Koester~\cite{koester2}.
 \end{proof}

\section*{Acknowledgements}
Zden\v{e}k Dvo\v{r}\'ak was supported by project 22-17398S (Flows and cycles in graphs on surfaces) of Czech Science Foundation. 
Carl Feghali was supported by the French National Research Agency under research grant ANR DIGRAPHS ANR-19-CE48-0013-01.

\bibliography{bibliography}{}
\bibliographystyle{abbrv}
 
\end{document}